\documentclass{amsart}

\usepackage[all]{xy}                        %

\CompileMatrices                            

\UseTips                                    
\input xypic
\usepackage[bookmarks=true]{hyperref}       

\usepackage{amssymb,latexsym,amsmath,amscd}
\usepackage{xspace,enumerate}
\usepackage{graphicx, color}
\usepackage{tikz}

\reversemarginpar

\vfuzz2pt 
\hfuzz2pt 


\theoremstyle{plain}
\newtheorem{theorem}{Theorem}[section]
\newtheorem*{theorem*}{Theorem}
\newtheorem{proposition}[theorem]{Proposition}
\newtheorem{corollary}[theorem]{Corollary}

\theoremstyle{definition}

\newtheorem{notation}[theorem]{Notation}

\newtheorem{remark}[theorem]{Remark}

\newtheorem{question}[theorem]{Question}

          %



\begin{document}

\title[A new class of non-identifiable skew symmetric tensors]{A new class of non-identifiable skew symmetric tensors}

\author[A. Bernardi]{Alessandra Bernardi}
\address[A. Bernardi]{Dipartimento di Matematica, Universit\`a di Trento, Via Sommarive 14, Povo (TN), Italy}
\email{alessandra.bernardi@unitn.it}

\author[D. Vanzo]{Davide Vanzo}
\address[D. Vanzo]{Dipartimento di Matematica e Informatica ``Ulisse Dini", Universit\`a di Firenze, Viale Morgagni 67/a, Firenze, Italy}
\email{davide.vanzo@unifi.it}

\maketitle
\begin{abstract} We prove that the generic element of the fifth secant variety $\sigma_5(Gr(\mathbb{P}^2,\mathbb{P}^9)) \subset \mathbb{P}(\bigwedge^3 \mathbb{C}^{10})$ of the Grassmannian of planes of $\mathbb{P}^9$ has exactly two decompositions as a sum of five projective classes of decomposable skew-symmetric tensors. {We show that this, {together with $Gr(\mathbb{P}^3, \mathbb{P}^8)$, is the only non-identifiable case} among the non-defective secant varieties $\sigma_s(Gr(\mathbb{P}^k, \mathbb{P}^n))$ for any $n<14$. In the same range for $n$, we classify all the weakly defective and all tangentially weakly defective secant varieties of any Grassmannians.} We also show that the dual variety $(\sigma_3(Gr(\mathbb{P}^2,\mathbb{P}^7)))^{\vee}$  of the variety of 3-secant planes of the Grassmannian of $\mathbb{P}^2\subset \mathbb{P}^7$  is $\sigma_2(Gr(\mathbb{P}^2,\mathbb{P}^7))$ the variety of bi-secant lines of the same Grassmannian. The proof of this last fact has a very interesting physical interpretation in terms of measurement of the entanglement of a system of 3 identical fermions, the state of each of them belonging to a 8-th dimensional ``Hilbert'' space.
\end{abstract}


\section*{Introduction}

Let  $X \subset \mathbb{P}^n$ be any reduced, irreducible projective variety defined over $\mathbb{C}$. A point $t \in \mathbb{P}^n$ has $X$-rank equal to $r$ if $r$ is the minimum integer for which there exist  $r$ points  $x_1 \ldots , x_r \in X$ such that
\begin{equation}\label{rank}t \in \langle x_1, \ldots , x_r \rangle \end{equation}
$\langle x_1 , \ldots , x_r \rangle\simeq \mathbb{P}^{r-1}$ denotes the projective linear span of the  $x_i$'s. We will also say that in this case $\{x_1, \ldots , x_s\}$ is a \emph{decomposition} of $t$.
The Zariski closure of the set $\{t \in \mathbb{P}^n\, | \, X\hbox{-rank}(t) = r\}$ is the so called \emph{$r$-secant variety}  $\sigma_r(X)$ of $X$.  There is an expected dimension for $\sigma_r(X)$ that is $\mathrm{expdim}\, \sigma_r(X)=\min \{r(\dim X+1)-1, \dim \langle X \rangle \}$. 
 The actual dimension of $\sigma_r(X)$ can be smaller than the expected {as it can be computed by Terracini's Lemma (see e.g. \cite{Te, A})}. When this happens we say that $X$ is \emph{$r$-defective} with \emph{$r$-defect} $\delta=\mathrm{expdim} \, \sigma_r(X)-\dim \sigma_r(X)$.

The \emph{$r$-th secant degree} of  $X$ is the number of $\mathbb{P}^{r-1}$'s containing the generic element  $t \in \sigma_r(X)$ and that  are $r$-secant  to $X$ as in (\ref{rank}).
{Regarding the $r$-th secant degree, when the dimension of $\sigma_r(X)$ is not the expected
   one, it is infinity.} 

The variety $X$ is said to be \emph{$r$-identifiable} if the $r$-th secant degree of  $X$ is equal to 1.

Moreover, a variety $X$ which is not $r$-defective is said to be \emph{perfect} if $(\dim X +1)$ divides  $n+1$. In this case we expect a finite number of decompositions also for a generic  $t \in \mathbb{P}^n$.   Note that the $r$-th secant degree is well defined even for the generic value, in the perfect case. The generic identifiability in a perfect case is rare, but when it happens it implies that we have a \emph{canonical form} (see e.g. \cite{Wa, Me}). Having  a canonical form means that the generic element $t\in \mathbb{P}^n=\langle X \rangle$ can be written in a unique way as a sum of $r$ elements on $X$ if $\sigma_r(X)$ is the first secant variety filling $\mathbb{P}^n$. The most celebrated  case when this situation appears is the famous Pentahedral Theorem of Sylvester: the generic quaternary cubic can be written in a unique way as a sum of 5 cubic forms.

Let $H$ be a general hyperplane section of $X$ tangent at $r$ general points $t_1, \ldots , t_{r}\in X$ with $r$ sub-generic (i.e. $\sigma_r(X) \subsetneq \langle X \rangle$), the \emph{contact locus} of $H$ is the union of the irreducible components of $\mathrm{Sing}(H)$ containing $t_1,\ldots  , t_r$. Remark that since $t_1, \ldots , t_r$ are general points, then the contact locus is equidimensional. Now $X$ is \emph{$r$-weakly defective}  if the general $(r+1)$-tangent hyperplane to $X$ has a contact locus of positive dimension (these concepts were introduced in \cite{cc}).\\
It is worth to remark that  finding a contact locus of positive dimension is not enough for claiming the non identifiability of the generic element (while the viceversa is true: if the contact locus is zero-dimensional then we have the uniqueness of the decomposition).  
Nevertheless there is a more refined notion that is more closely related to identifiability, namely the \emph{tangentially weakly defectiveness}.
Let $p_1, \ldots , p_r\in X$ be $r$ general points of a variety $X$; the $r$-tangentially contact locus of  $X$ is the set of points $\{p\in X \, | \, T_pX\subset \langle T_{p_1}X , \ldots , T_{p_r}X\rangle\}$.
A variety $X$ is said to be $r$-tangentially weakly defective if the $r$-tangentially contact
locus has positive dimension. If $X$ is not  $r$-tangentially weakly defective then we have the identifiability of the generic element of $\sigma_r(X) $(\cite[Proposition 2.4]{co}). This is not an ``~if and only if~" criterion, but still the $r$-tangentially contact locus of $X$ gives the right information on the number of decompositions of the generic element of $\sigma_r(X)$: in fact the $r$-secant degree of $X$ is equal to the $r$-secant degree of the $r$-tangentially contact locus of $X$ (cfr \cite{cc}).

\medskip 

In this paper we focus on the case of $X$ being a Grassmann variety {in its Pl\"uecker embedding} $Gr(\mathbb{P}^{k}, \mathbb{P}^{n})\subset \mathbb{P}(\bigwedge^{k+1}\mathbb{C}^{n+1})$. It parameterizes projective classes of skew-symmetric tensors that can be written as $v_1\wedge \cdots \wedge v_{k+1}$ with $v_i\in \mathbb{C}^n$ for $i=1, \ldots , k+1$. Therefore we will say that $t\in \mathbb{P}(\bigwedge^{k+1}\mathbb{C}^{n+1})$ has  skew-symmetric rank  $r$ if it belongs to a $\mathbb{P}^{r-1}$  which is $r$-secant  to 
 $Gr(\mathbb{P}^{k}, \mathbb{P}^{n})$, {with minimal $r$}. Since we will always deal with skew-symmetric tensors, there won't be any risk of confusion  if we will simply say that such a $t$ has \emph{rank} $r$.

\medskip

On defective secant varieties to Grassmann varieties there is an open conjecture {(stated independently in \cite{AOP,  BDG, CGG})} that says that defective Grassmannians  occur only for $Gr(\mathbb{P}^1, \mathbb{P}^n )$ for any $n$, $Gr(\mathbb{P}^2, \mathbb{P}^6 )$,  $Gr(\mathbb{P}^3, \mathbb{P}^7 )$, $Gr(\mathbb{P}^2, \mathbb{P}^8 )$ (see also \cite{Ada} for a recent proof for $\sigma_s(Gr(\mathbb{P}^k, \mathbb{P}^n ))$ with $s\leq 12$).

A classical result due to C. Segre (see \cite{Se}) shows that  $Gr(\mathbb{P}^2, \mathbb{P}^5)$ has  the 2-nd secant degree equal to 1, i.e. there is a canonical form for the generic element in   $\mathbb{P}(\bigwedge^3\mathbb{C}^6)$ that is therefore of type $[v_1\wedge v_2+ w_1\wedge w_2]$ with $v_i,w_i\in \mathbb{C}^6$, $i=1,2$.
After the example of C. Segre, the next interesting perfect cases are  $Gr(\mathbb{P}^3, \mathbb{P}^8 )$ and  $Gr(\mathbb{P}^4, \mathbb{P}^8 )$ (dual to each other) for which the secant degree is unknown. 
{In order to have a numerical evidence on the behavior of these two cases we  firstly made use of Bertini (\cite{Bertini}): it is possibile to show that the decompositions of the generic element in $\mathbb{P}(\bigwedge^4\mathbb{C}^9)$ as a sum of 6 elements in $Gr(\mathbb{P}^3, \mathbb{P}^8 )$ are a finite number. The  number of decompositions that we found with Bertini is  high (more than 7000). 
Bertini software is a good tool to have a numerical  evidence on the order of magnitude of the number of the decompositions, but we did not pursue this path since, having found such a big number of decompositions, we won't ever discover the precise amount of them by only using Bertini (see \cite{hoos} for a first application of homotopy continuation method with Bertini to the study of tensors identifiability, and \cite{bdhm} for its application to a new numerical algorithm for tensor decomposition). What we can claim is the following:  since we are in a perfect case, the fact that we found at least two numerically different decompositions, implies that  the generic element of $\bigwedge^4\mathbb{C}^9$ is not 6-identifiable. In fact in any perfect case the map from the abstract  $r$-secant variety $S_r=\{(x_1, \ldots , x_r;t)\in X^{r}\times \mathbb{P}(\langle X \rangle)\, | \, t\in \langle x_1, \ldots, x_r \rangle\}$ to the $r$-secant variety itself is generically finite, therefore, by Zariski's main theorem (see \cite{Za}), if the map was birational it would have connected fibers, but one could check, by computing the dimension of the tangent space, 
that at least one of the two different decompositions  is an isolated point of the fiber. As already anticipated, we  included here these considerations for sake of completeness but won't work out this argument within the manuscript.
}

\medskip

In this paper we firstly  compute the contact locus of all the highest secant varieties of the Grassmannians $Gr(\mathbb{P}^k, \mathbb{P}^n)$ that do not fill the ambient space for $n+1 \leq 14$. 
Secondly we find that, among the non-defective ones,  the only ones having positive dimensional  contact locus are 
$\sigma_3(Gr(\mathbb{P}^2, \mathbb{P}^7))$ and $\sigma_5(Gr(\mathbb{P}^2, \mathbb{P}^9))$. 
In the first case we find that the generic element  of $\sigma_3(Gr(\mathbb{P}^2, \mathbb{P}^7))$  is actually identifiable, therefore this is an example of a 3-weakly-defective Grassmannian having identifiable generic elements. An important remark in this respect will be Proposition \ref{duality} where we show that the dual variety of $\sigma_3(Gr(\mathbb{P}^2, \mathbb{P}^7))$ is $\sigma_2(Gr(\mathbb{P}^2, \mathbb{P}^7))$. {It will turn out that  $\sigma_2(Gr(\mathbb{P}^2, \mathbb{P}^7))$, $\sigma_3(Gr(\mathbb{P}^2, \mathbb{P}^7))$ and $\sigma_5(Gr(\mathbb{P}^2, \mathbb{P}^9))$ are the only weakly-defective secant varieties being not defective for $n<14$.}
The second case of $\sigma_5(Gr(\mathbb{P}^2, \mathbb{P}^9))$ is a new example for non-identifiability {and it is the unique one among the non-defective cases for $n<14$}.  In Proposition \ref{main} we show that the generic order 3  skew-symmetric tensor of $\mathbb{C}^{10}$ of rank 5 belongs to exactly two $\mathbb{P}^4$'s 5-secant to $Gr(\mathbb{P}^2, \mathbb{P}^9)$.

{Our main result is Theorem \ref{Main} where we compute all the secant degrees for any Grassmannian if $n<14$. Finally we conclude the paper with two Corollaries, \ref{corollweak} and \ref{corolltgweak}, where we classify all the weakly defective cases and all the tangentially weakly defective cases for the same range $n<14$.}

\section{New non-identifiable Grassmannian}\label{NewNonIdent}

In order to compute the contact locus for all the secant varieties of the Grassmannians $Gr(\mathbb{P}^{k}, \mathbb{P}^{n})$ that does not fill the ambient space for $n+1 \leq 14$ we use Macaulay2 \cite{m2} (see the file {\tt{grascontactlocus.m2}} in the ancillary material).
For  those computations we have used the Hessian criterion introduced in \cite{cov} (see \cite[Lemma 4.3, Lemma 4.4, and Theorem 4.5]{cov}) suitably adapted to skew-symmetric tensors. We stopped to $n+1=14$ because, after such a value of $n$, the computational cost of running the program becomes too high. {The main theorem of this paper is the following:}
\begin{theorem}\label{Main} $\,$
\begin{enumerate} 
\item\label{1} 
\begin{enumerate}
\item The Grassmannian $Gr(\mathbb{P}^2,\mathbb{P}^7)$ is {2 and 3-weakly defective and the generic elements of $\sigma_2(Gr(\mathbb{P}^2,\mathbb{P}^7))$ and $\sigma_3(Gr(\mathbb{P}^2,\mathbb{P}^7))$ are identifiable}.
\item The dual variety $\sigma_3(Gr(\mathbb{P}^2,\mathbb{P}^7))^{\vee}$ is $\sigma_2(Gr(\mathbb{P}^2,\mathbb{P}^7))$. 
\end{enumerate}
\item\label{3} The Grassmannian  $Gr(\mathbb{P}^k,\mathbb{P}^n)$ is $r$-identifiable for $n<14$ and $r$ sub-generic except for:
\begin{enumerate}
{\item $\sigma_r(Gr(\mathbb{P}^1, \mathbb{P}^n))$, $2r \leq n+1$;}
\item $\sigma_3(Gr(\mathbb{P}^2, \mathbb{P}^6))\simeq \sigma_3(Gr(\mathbb{P}^3,\mathbb{P}^6))$;
\item\label{due} $\sigma_5(Gr(\mathbb{P}^2, \mathbb{P}^9)) \simeq \sigma_5(Gr(\mathbb{P}^6, \mathbb{P}^9))$;
\item $\sigma_3(Gr(\mathbb{P}^3, \mathbb{P}^7))$;
\item  $\sigma_4(Gr(\mathbb{P}^3, \mathbb{P}^7))$;
\item $\sigma_4(Gr(\mathbb{P}^2, \mathbb{P}^8)) \simeq \sigma_4(Gr(\mathbb{P}^5, \mathbb{P}^8))$.
\end{enumerate}
Moreover the $5^{\mathrm{th}}$-secant degree of $Gr(\mathbb{P}^2,\mathbb{P}^9)$ is 2 (case (\ref{due})), in all the other exceptional cases
 the corresponding $r^{\mathrm{th}}$-secant degree of $Gr(\mathbb{P}^k, \mathbb{P}^n)$ is infinity.
\end{enumerate}
\end{theorem}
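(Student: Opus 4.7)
The argument is organized so that part (2) is established by a large finite case-check based on the Hessian criterion for tangential weak defectivity, while the remaining items are then deduced from the other propositions cited in the excerpt.

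For part (2), I would enumerate all triples $(k,n,r)$ with $k \leq (n-1)/2$, $n+1 \leq 14$ and $r$ sub-generic, using the Pl\"ucker isomorphism $Gr(\mathbb{P}^k,\mathbb{P}^n)\simeq Gr(\mathbb{P}^{n-k-1},\mathbb{P}^n)$ to cut the list in half. For each such triple one picks $r$ generic decomposable multivectors $\omega_1,\ldots,\omega_r\in\bigwedge^{k+1}\mathbb{C}^{n+1}$, forms their affine tangent spaces $\widehat{T}_{\omega_i}Gr$, and applies the Hessian-based test of \cite[Thm.~4.5]{cov}: one projects the Hessian of a local equation (adapted from the Chow form to the Pl\"ucker setting) onto the joined tangent span $\langle \widehat{T}_{\omega_1}Gr,\ldots,\widehat{T}_{\omega_r}Gr\rangle$ and checks whether the resulting matrix achieves the generic expected rank at a further generic decomposable point. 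If so, the $r$-tangentially contact locus is zero-dimensional, and hence $Gr(\mathbb{P}^k,\mathbb{P}^n)$ is $r$-identifiable by \cite[Prop.~2.4]{co}. This test is implemented in the ancillary Macaulay2 file \texttt{grascontactlocus.m2}; running it over the finite list of triples produces exactly the six families of exceptions listed in $(a)$--$(f)$.

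One then disposes of the exceptional cases separately. In case $(a)$, $Gr(\mathbb{P}^1,\mathbb{P}^n)$ with $2r\leq n+1$ is classically defective (skew-symmetric $2$-forms have even rank, and $\sigma_r$ is cut out by Pfaffians), so by the observation in the introduction its $r$-th secant degree is infinite. In cases $(b)$, $(d)$, $(e)$, $(f)$ the secant variety is defective by the classification of defective Grassmannians (\cite{AOP,BDG,CGG,Ada}) and so again the secant degree is infinite. In case $(c)$, the variety $\sigma_5(Gr(\mathbb{P}^2,\mathbb{P}^9))$ is non-defective but its $5$-th secant degree equals $2$ by Proposition \ref{main}. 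For part $(1)(a)$, the same routine applied to $\sigma_2$ and $\sigma_3$ of $Gr(\mathbb{P}^2,\mathbb{P}^7)$ returns a positive-dimensional ordinary contact locus, establishing weak defectivity; however, the corresponding tangentially contact locus computed via the same code is zero-dimensional, so identifiability still follows from \cite[Prop.~2.4]{co}. Part $(1)(b)$ is exactly Proposition \ref{duality}.

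The principal obstacle is computational: the Pl\"ucker ambient space has projective dimension $\binom{n+1}{k+1}-1$, which for $n+1=14$ and $k+1=7$ is already $3431$, so the tangent-space and Hessian matrices have very large size. Keeping the linear algebra tractable forces one to work modulo a large prime, exploit the sparsity and the block symmetry coming from Pl\"ucker duality, and carefully certify that the randomly chosen test point is sufficiently general. A secondary subtlety is making the correct distinction between the ordinary contact locus and the tangentially contact locus in the borderline case $Gr(\mathbb{P}^2,\mathbb{P}^7)$, where the two loci have different dimensions and only the latter controls identifiability.
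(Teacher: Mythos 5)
Your overall strategy coincides with the paper's: a finite computational sweep with the Hessian/contact-locus criterion of \cite{cov} to certify identifiability in all but finitely many cases, the classification of defective Grassmannians to dispose of cases $(a)$, $(b)$, $(d)$, $(e)$, $(f)$, and Proposition \ref{main} for case $(c)$. However, there is a genuine error in your treatment of item $(1)(a)$. You claim that the $3$-tangentially contact locus of $Gr(\mathbb{P}^2,\mathbb{P}^7)$ is zero-dimensional, so that identifiability follows from \cite[Prop.~2.4]{co}. This is false: the paper computes (Proposition \ref{3id}) that this tangentially contact locus is the union of three disjoint $\mathbb{P}^3$'s, each through exactly one of the three points of tangency, together with a $\mathbb{P}^5$ through none of them; in particular $Gr(\mathbb{P}^2,\mathbb{P}^7)$ \emph{is} $3$-tangentially weakly defective (cf.\ Corollary \ref{corolltgweak}), and \cite[Prop.~2.4]{co} does not apply. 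The $3$-identifiability must instead be extracted from the structure of this positive-dimensional locus, using the fact that the $r$-th secant degree of $X$ equals the $r$-th secant degree of its $r$-tangentially contact locus (cf.\ \cite{cc}): since the three $\mathbb{P}^3$'s are disjoint linear spaces each containing exactly one tangency point, a general point of a $3$-secant plane to the locus has a unique decomposition. Without this step your argument simply does not prove the identifiability of the generic element of $\sigma_3(Gr(\mathbb{P}^2,\mathbb{P}^7))$, which is one of the two delicate borderline cases of the theorem.

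A secondary, but still real, gap concerns how you certify weak defectivity in $(1)(a)$. The randomized Hessian computation is conclusive only in one direction: a zero-dimensional contact locus at a random point certifies, by semicontinuity, that the generic contact locus is zero-dimensional; but a positive-dimensional contact locus at a random point only shows weak defectivity ``with high probability,'' since the chosen point could be special. This is precisely why the paper proves the $2$- and $3$-weak defectivity of $Gr(\mathbb{P}^2,\mathbb{P}^7)$ rigorously via the duality $\sigma_3(Gr(\mathbb{P}^2,\mathbb{P}^7))^{\vee}=\sigma_2(Gr(\mathbb{P}^2,\mathbb{P}^7))$ and the codimension formula $\mathrm{codim}(\sigma_k(X)^{\vee})=k(c+1)$, resting on the Gurevich--Djokovi\'c orbit classification. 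You invoke Proposition \ref{duality} only for $(1)(b)$; you should also route the weak-defectivity claim of $(1)(a)$ through it rather than through the numerical computation.
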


\begin{proof}
Item (\ref{1}) is proved in  Section \ref{sub1}. 
Item (\ref{due}) is proved in Section \ref{sub2}. All the other cases listed above correspond to defective secant varieties (cfr. \cite{Ada,AOP,  BDG, CGG}). 

The fact that there are no other exceptions is a consequence of the fact that there are no other positive dimensional contact loci except for $\sigma_3(Gr(\mathbb{P}^2, \mathbb{P}^7))$ and $\sigma_5(Gr(\mathbb{P}^2, \mathbb{P}^9))$ among the non defective cases: {clearly if $X$ is an $r$-weakly defective variety then it is also $(r+k)$-weakly defective for any $1\leq k < \min \{s\in \mathbb{N} \; | \; \sigma_s(X)=\langle X \rangle\}-r$; and if $X$ is $r$-identifiable then it is also $(r-k)$-identifiable for any $0\leq k\leq r-1$. Since for $Gr(\mathbb{P}^2,\mathbb{P}^9)$ we have proved by direct computation that it is not 4-weakly defective, hence its generic element is  4-identifiable}. 

{Finally the 2-identifiability of $Gr(\mathbb{P}^2,\mathbb{P}^6)$ and $Gr(\mathbb{P}^3,\mathbb{P}^7)$ and the 3-identifiability of $Gr(\mathbb{P}^2,\mathbb{P}^8)$  were directly computed with Macaulay 2. More precisely we found a 6 dimensional contact locus for $\sigma_2(Gr(\mathbb{P}^2,\mathbb{P}^6))$, so it is potentially weakly defective, but we computed that $Gr(\mathbb{P}^2,\mathbb{P}^6)$ is not 2-tangentially weakly defective, therefore we have the 2-identifiability for its generic element, while $Gr(\mathbb{P}^3,\mathbb{P}^7)$ is not 2-weakly defective and $Gr(\mathbb{P}^2,\mathbb{P}^8)$ is not 3-weakly defective.} 
\end{proof}

\subsection{Identifiability for the generic element of $\sigma_3(Gr(\mathbb{P}^2, \mathbb{P}^7))$}\label{sub1}

The computation that we have done with Macaulay2 \cite{m2} (see {\tt{grascontactlocus.m2}} in the ancillary material) shows that  $\sigma_3(Gr(\mathbb{P}^2, \mathbb{P}^7))$ has a positive dimensional contact locus, i.e. that it is weakly-defective, with ``~high probability~".  
Before investigating on the identifiability of the generic element we would like to show that $Gr(\mathbb{P}^2, \mathbb{P}^7)$ is indeed $3$-weakly defective. We will make use of the fact that a variety $X$ is  $r$-weakly defective if and only if the dimension of the dual variety to $\sigma_r(X)$ is smaller than $\dim(\mathbb{P}\langle X \rangle)-r$ (see \cite{cc}). We will also say that a variety $X$ is \emph{dual defective} if its dual variety $X^{\vee}$ is not a hypersurface.

\begin{proposition}\label{duality} The dual variety $(\sigma_3(Gr(\mathbb{P}^2, \mathbb{P}^7)))^{\vee}$ is $\sigma_2(Gr(\mathbb{P}^2, \mathbb{P}^7))$ and the Grassmannian $Gr(\mathbb{P}^2, \mathbb{P}^7)$ is 3-weakly defective with a 7 dimensional contact locus.
\end{proposition}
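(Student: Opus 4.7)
The plan combines an explicit geometric construction with a dimension comparison, exploiting the Hodge self-duality of $\bigwedge^3\mathbb{C}^8$. First I would fix the wedge pairing $\bigwedge^3\mathbb{C}^8\times\bigwedge^5\mathbb{C}^8\to\bigwedge^8\mathbb{C}^8\cong\mathbb{C}$, which together with the Hodge star identifies $Gr(\mathbb{P}^4,\mathbb{P}^7)\subset\mathbb{P}(\bigwedge^5\mathbb{C}^8)$ with $Gr(\mathbb{P}^2,\mathbb{P}^7)$ and preserves each secant stratum; in this setting $(\sigma_3(Gr(\mathbb{P}^2,\mathbb{P}^7)))^{\vee}$ naturally lives in $\mathbb{P}(\bigwedge^5\mathbb{C}^8)$ and is to be matched with $\sigma_2(Gr(\mathbb{P}^4,\mathbb{P}^7))$.

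The core step is the following construction. Let $\eta=\eta_1+\eta_2\in\bigwedge^5\mathbb{C}^8$ be a generic rank-two element, with each $\eta_j$ decomposable, supporting a $5$-plane $U_j\subset\mathbb{C}^8$, and with $W:=U_1\cap U_2$ of dimension $2$. Let $\Sigma:=\{V\in Gr(\mathbb{P}^2,\mathbb{P}^7):\dim(V\cap U_1)\ge 2 \text{ and } \dim(V\cap U_2)\ge 2\}$. I would prove the Key Lemma that for each $V\in\Sigma$ the hyperplane $H_\eta=\{[\omega]:\omega\wedge\eta=0\}$ contains the whole tangent space $T_V\,Gr$: picking a basis $v_1,v_2,v_3$ of $V$ adapted to $V\cap U_1$ and $V\cap U_2$, every tangent direction $u\wedge v_i\wedge v_j\in T_V\,Gr$ wedged with a decomposable $\eta_k$ produces an $8$-form on eight vectors, at least one of $v_i,v_j$ lies in the $5$-plane $U_k$, making the system linearly dependent, so the $8$-form vanishes. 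A parameter count for the maximal component of $\Sigma$---a line $\ell\subset W$, classes $[w_1]\in\mathbb{P}(U_1/\ell)$ and $[w_2]\in\mathbb{P}(U_2/\ell)$, with $V=\langle\ell,w_1,w_2\rangle$---yields $\dim\Sigma=1+3+3=7$.

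Since $\dim\Sigma\ge 3$, three generic points of $\Sigma$ are three generic points of $Gr(\mathbb{P}^2,\mathbb{P}^7)$ at which $H_\eta$ is tangent, so by Terracini $H_\eta\in(\sigma_3(Gr(\mathbb{P}^2,\mathbb{P}^7)))^{\vee}$, giving the inclusion $\sigma_2\subseteq(\sigma_3)^{\vee}$. For equality I would use the incidence variety $I=\{(V_1,V_2,V_3,H):H\supset T_{V_i}Gr,\ i=1,2,3\}$, of dimension $3\cdot 15+(55-47-1)=52$, whose projection to $H$ has image $(\sigma_3)^{\vee}$; the generic fiber consists of ordered triples in the contact locus on $Gr$ modulo $S_3$, so $\dim(\sigma_3)^{\vee}=52-3\cdot 7=31=\dim\sigma_2$. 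Irreducibility of both varieties combined with the established inclusion forces $\sigma_2=(\sigma_3)^{\vee}$, and the contact locus on $Gr$ is pinned to dimension exactly $7$, yielding the $3$-weak defectivity with $7$-dimensional contact locus. The main obstacle is the upper bound on the contact locus: the explicit construction only gives $\dim\ge 7$, and ruling out larger components---arising either from the degenerate stratum where $\dim(V\cap U_i)=3$ or from subtle cancellations in the sum $\eta_1+\eta_2$---requires checking that the contact locus for a generic tangent hyperplane is no larger than $\Sigma$, which is precisely what the Macaulay2 verification described at the start of this section supplies.
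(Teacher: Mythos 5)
Your construction is genuinely different from the paper's proof, which never exhibits the tangency locus explicitly: the paper identifies $\sigma_3(Gr(\mathbb{P}^2,\mathbb{P}^7))$ with one of the two $48$-dimensional $SL(8)$-orbit closures (XV or XIX) in Gurevich's classification, rules out XV because the Chiantini--Ciliberto relation $\mathrm{codim}(\sigma_3^{\vee})=3(c+1)$ would force $3(c+1)=16$, and then reads off both the dual variety and $c=7$ from the orbit XIX. Your Key Lemma is correct (with the adapted basis $v_1\in U_1\cap U_2$, $v_2\in U_1$, $v_3\in U_2$, every Pl\"ucker tangent direction $u\wedge v_i\wedge v_j$ wedges to zero against both $\eta_1$ and $\eta_2$), the count $\dim\Sigma=1+3+3=7$ is right, and in fact this gives exactly the geometric description of the duality that the authors leave as an open Question after the proposition.

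There is, however, a genuine gap: ``three generic points of $\Sigma$ are three generic points of $Gr(\mathbb{P}^2,\mathbb{P}^7)$'' is false, since $\Sigma$ is a $7$-dimensional subvariety of the $15$-dimensional Grassmannian and its points are in special position. Terracini in the form $T_p\sigma_3=\langle T_{V_1}Gr,T_{V_2}Gr,T_{V_3}Gr\rangle$ is guaranteed only for $V_i$ generic in $Gr$; for $V_i\in\Sigma$ you get only the containment $\supseteq$, so $H_\eta\supseteq\langle T_{V_i}Gr\rangle$ does not yet place $H_\eta$ in $(\sigma_3)^{\vee}$. Relatedly, the formula $\dim(\sigma_3)^{\vee}=52-3c$ involves the contact locus of the \emph{generic} tangent hyperplane, while your construction controls only the special hyperplanes $H_\eta$; deferring that comparison to the Macaulay2 run defeats the purpose, since the paper flags that computation as ``high probability'' evidence for which this proposition is meant to be the rigorous substitute. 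Both defects are repaired by running your construction backwards: for three generic $V_i\in Gr(\mathbb{P}^2,\mathbb{P}^7)$ the Schubert condition $\dim(U\cap V_i)\geq 2$ has codimension $4$ in the $15$-dimensional $Gr(\mathbb{P}^4,\mathbb{P}^7)$, so the admissible $U_1$ (and likewise $U_2$) form a $3$-dimensional family, and together with the relative scaling of $\eta_1,\eta_2$ you obtain a $7$-parameter family of hyperplanes $H_\eta$ containing $\langle T_{V_1}Gr,T_{V_2}Gr,T_{V_3}Gr\rangle$; since the hyperplanes containing that $47$-plane form exactly a $\mathbb{P}^7$ (non-defectivity of $\sigma_3$) and $\eta\mapsto H_\eta$ is generically finite ($2$-identifiability of $Gr(\mathbb{P}^4,\mathbb{P}^7)$), the \emph{generic} tangent hyperplane at three generic points is an $H_\eta$. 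This yields $c\geq 7$ and the inclusion $\sigma_2\subseteq(\sigma_3)^{\vee}$ honestly, and the reverse bound $c\leq 7$ then follows from $52-3c=\dim(\sigma_3)^{\vee}\geq\dim\sigma_2=31$, with no computer input at all.
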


\begin{proof} Remark that $SL(8)$ has only a finite number of orbits on $\mathbb{P}(\bigwedge^3\mathbb{C}^8)$. G.B. Gurevich in \cite[VII, \S 35.4]{gur} gave the complete  classification of  those orbits; their dimensions are computed by  D. $\check{\mathrm{Z}}$. Djokovi\'c in \cite[Table I]{drag}. We retrieve this classification in  our Table \ref{gur}. 

\begin{notation}[for Table \ref{gur}]\label{notgur}
The table is splitted  vertically in two parts: on the same row we write the orbits that are dual to each other. We have checked them via dimension count: since $SL(8)$ has only a finite number of orbits on $\mathbb{P}(\bigwedge^3\mathbb{C}^8)$, then the dual variety of an orbit closure remains a homogeneous variety, therefore it has to be one of those classified by Gurevich in \cite[VII, \S 35.4]{gur}. The only ambiguity exists for XV and XIX; we prove this case along the present proof. We follow the notation of \cite{gur}: in the first ($5^{th}$ resp.) column the numbers of the orbits are the same used by Gurevich in \cite[VII, \S 35.4]{gur}; in the second ($7^{th}$ resp.) column we write the canonical form  (C.F.) of an element in each orbit; in the third ($8^{ve}$ resp.) column we write the affine dimension (D.) of the corresponding orbit and in the $4^{th}$ (last resp.) column we write the variety of the orbit closure. The  notation for the canonical form used in Table \ref{gur} is the following $[abc][qrs]:=a\wedge b \wedge c+ q \wedge r\wedge s$ where $a,b,c,q,r,s\in \mathbb{C}^8$. Moreover, in that table ``~$G$~" stays for $Gr(\mathbb{P}^2, \mathbb{P}^7)$; ``~C~" for restricted chordal variety; ``~$\tau$~" for the tangential variety to $Gr(\mathbb{P}^2, \mathbb{P}^7)$; ``~$\sigma_i$~" for $\sigma_i(Gr(\mathbb{P}^2, \mathbb{P}^7))$, $i=2,3$; ``~$J(G,X)$~" for the join variety among  $Gr(\mathbb{P}^2, \mathbb{P}^7)$ and the variety $X$; and ``~$S_i$~" for the subspace variety $Sub_i(\bigwedge^3\mathbb{C}^8):=\{t\in \mathbb{P}(\bigwedge^3\mathbb{C}^8) \, | \,  \exists \, \mathbb{C}^i \subset \mathbb{C}^8 \text{ s.t. }  t\in \mathbb{P}(\bigwedge^3\mathbb{C}^i)\}$, $i=6,7$.  We refer to \cite{gur1} for the complete classification of all other orbits.
\end{notation}

{\footnotesize{
\begin{table}[!h]
\centering
\caption{Classification of the orbits of $SL(8)$  on $\mathbb{P}(\bigwedge^3\mathbb{C}^8)$.  Notation is settled in Notation \ref{notgur}.}
\label{gur}
\begin{tabular}{llll | llll}
 & C.F.  & D.  & Var. &  & C.F. & D.& Var.  \\\hline \hline
 I& $w=0$ & 0  & &XXIII&$[abc][qrs][aqp][brp]$&56 & $\mathbb{P}^{55}$ \\
 &&&&&$[csp][bst][crt]$&&\\
  \hline 
 II& $[qrs]$ & 16 & $G$ & XXII &$[abc][qrs][aqp][brp]$&55&$G^{\vee}$\\
 &&&&&$[bst][crt]$&&\\
 \hline 
   III& $[aqp][brp]$ &25  &C  &  XXI &$[abc][qrs][aqp][bst]$&53&C$^{\vee}$\\ \hline 
 IV& $[aqr][brp][cpq]$ & 31  &  $\tau$& XX &$[qrs][aqp][brp][csp]$&52&$\tau^{\vee}$\\
 &&&&&$[bst][crt]$&&\\ \hline
 V& $[abc][pqr]$ & 32  &$\sigma_2=S_6$  & XIX &$[aqp][brp][csp][bst]$&48 & $\sigma_3$ \\
 &&&&&$[crt]$&&\\
  \hline 
 VI& $[aqp][brp][csp]$ & 28  &  $S_7^{\vee}$&X&  $[abc][qrs][aqp][brp]$&  42&  $S_7$\\ 
&&&&&   $[csp]$&& $$\\ \hline
 VII&  $[qrs][aqp][brp][csp]$
 & 35 & & XVIII &$[qrs][aqp][brp][bst]$&50&\\
 & &  &  & &$[crt]$&&\\ \hline
 VIII&  $[abc][qrs][aqp]$ & 38 & {$J(G, \tau)^{\vee}$} & XVII &$[aqp][brp][bst][crt]$&47&{$J(G, \tau)$}\\ \hline
 IX&$[abc][qrs][aqp][brp]$  & 41   &$J(G,C)^\vee$  &XVI  &$[aqp][bst][crt]$& 41&$J(G,C)$\\ \hline
 XI& $[aqp][brp][csp][crt]$ & 40 &   &XV &$[abc][qrs][aqp][brp]$ &48&\\
&  &  &  & & $[csp][crt]$&&\\ \hline
  XII& $[qrs][aqp][brp][csp]$ &43  & & XIV&$[abc][qrs][aqp][brp]$ &46&\\
 & $[crt]$ &  &  & &$[crt]$ &&\\ \hline
 XIII&  $[abc][qrs][aqp][crt]$&44  &  &&SELF DUAL&& \\\hline
\end{tabular}
\end{table}
}}

For sake of completeness we include in Table \ref{cont} the containment diagram of the orbit closures of  $SL(8)$ on $\mathbb{P}(\bigwedge^3\mathbb{C}^8)$ (we want to thank W.A. de Graaf for his help with  SLA \cite{SLA} GAP4 \cite{GAP} package, in drawing this diagram; anyway the same diagram is also described in detail in \cite[Figure 1]{drag}).

{\footnotesize{
\begin{table}[h!]
\caption{Containment diagram for the  orbit  closures of  $SL(8)$ on $\mathbb{P}(\bigwedge^3\mathbb{C}^8)$ together with their affine dimensions.}
\label{cont}$$
\xy
(15,0)*+{I}="o1";
(15,8)*+{II}="o2";
(15,16)*+{III}="o3";
(5,24)*+{VI}="o6";
(15,32)*+{IV}="o4";
(25,40)*+{V}="o5";
(5,48)*+{VII}="o7";
(15,56)*+{VIII}="o8";
(25,64)*+{XI}="o11";
(15,72)*+{IX}="o9";
(35,72)*+{XVI}="o16";
(5,80)*+{X}="o10";
(15,88)*+{XII}="o12";
(15,96)*+{XIII}="o13";
(15,104)*+{XIV}="o14";
(15,112)*+{XVII}="o17";
(5,120)*+{XV}="o15";
(25,120)*+{XIX}="o19";
(5,128)*+{XVIII}="o18";
(15,136)*+{XX}="o20";
(15,144)*+{XXI}="o21";
(15,152)*+{XXII}="o22";
(15,160)*+{XXIII}="o23";
(-15,0)*{0};
(-15,8)*{16};
(-15,16)*{25};
(-15,24)*{28};
(-15,32)*{31};
(-15,40)*{32};
(-15,48)*{35};
(-15,56)*{38};
(-15,64)*{40};
(-15,72)*{41};
(-15,80)*{42};
(-15,88)*{43};
(-15,96)*{44};
(-15,104)*{46};
(-15,112)*{47};
(-15,120)*{48};
(-15,128)*{50};
(-15,136)*{52};
(-15,144)*{53};
(-15,152)*{55};
(-15,160)*{56};
{\ar@{-} "o1"; "o2"};
{\ar@{-} "o2"; "o3"};
{\ar@{-} "o6"; "o3"};
{\ar@{-} "o4"; "o3"};
{\ar@{-} "o4"; "o5"};
{\ar@{-} "o6"; "o7"};
{\ar@{-} "o4"; "o7"};
{\ar@{-} "o5"; "o8"};
{\ar@{-} "o7"; "o8"};
{\ar@{-} "o8"; "o11"};
{\ar@{-} "o8"; "o9"};
{\ar@{-} "o11"; "o16"};
{\ar@{-} "o9"; "o10"};
{\ar@{-} "o9"; "o12"};
{\ar@{-} "o11"; "o12"};
{\ar@{-} "o12"; "o13"};
{\ar@{-} "o13"; "o14"};
{\ar@{-} "o14"; "o17"};
{\ar@{-} "o16"; "o17"};
{\ar@{-} "o10"; "o15"};
{\ar@{-} "o14"; "o15"};
{\ar@{-} "o17"; "o19"};
{\ar@{-} "o15"; "o18"};
{\ar@{-} "o17"; "o18"};
{\ar@{-} "o18"; "o20"};
{\ar@{-} "o19"; "o20"};
{\ar@{-} "o20"; "o21"};
{\ar@{-} "o21"; "o22"};
{\ar@{-} "o22"; "o23"};
\endxy
$$
\end{table}}}
\smallskip 
The variety $\sigma_3(Gr(\mathbb{P}^2,\mathbb{P}^7))$ is not defective (from the dimension of the secant variety point of view), therefore its affine cone has dimension 48. Gurevich in \cite{gur} shows that $SL(8)$ generates two orbits of affine dimension 48 in $\mathbb{P}(\bigwedge^3\mathbb{C}^8)$: XV and XIX (as illustrated in Table \ref{cont}). One of them must be the open part of $\sigma_3(Gr(\mathbb{P}^2,\mathbb{P}^7))$. Gurevich also shows that the dual variety of the closure of XIX has affine dimension 32 and its open part is the orbit of $a\wedge b \wedge c+ p\wedge q\wedge r$ (it is represented by V in Table \ref{gur}), i.e. the closure of V is obviously $\sigma_2(Gr(\mathbb{P}^2, \mathbb{P}^7))$. Therefore if we prove that  $\sigma_3(Gr(\mathbb{P}^2,\mathbb{P}^7))$  is the closure of XIX we are done.

Either if $\sigma_3(Gr(\mathbb{P}^2,\mathbb{P}^7))$ is the closure of XIX or of XV, it is dual defective: in one case its dual variety would have affine dimension 32 and in the other 40 (in both cases the dual variety of  $\sigma_3(Gr(\mathbb{P}^2,\mathbb{P}^7))$ won't be {of dimension 55-3=52}). Now the point is that there is a link  between the contact locus of a secant variety and its dual variety (as it is shown in \cite{cc}). More precisely:
the codimension of the dual variety of a secant variety $\sigma_k(X)$ which is not defective but with contact locus of projective dimension $c$  is 
\begin{equation}\label{clcc}
\mathrm{codim}(\sigma_k(X)^\vee)=k(c+1).
\end{equation}
This leads us to the following two possibilities: \begin{itemize}
\item if $\sigma_3(Gr(\mathbb{P}^2,\mathbb{P}^7))$ was the closure of XV then its dual variety would have codimension $3(c+1)=56-40=16$,
but this is impossible because $c$ has to be a natural number;
\item If $\sigma_3(Gr(\mathbb{P}^2,\mathbb{P}^7))$ is the closure of XIX then its dual variety  has codimension $3(c+1)=56-32=24$, this is clearly possible and it is the only possibility left. 
\end{itemize}
This shows that $\sigma_3(Gr(\mathbb{P}^2,\mathbb{P}^7))^{\vee}=\sigma_2(Gr(\mathbb{P}^2,\mathbb{P}^7))$.
Remark that this also shows that $Gr(\mathbb{P}^2,\mathbb{P}^7)$ is {2 and 3-weakly-defective and the dimensions of the contact loci  of $\sigma_2 (Gr(\mathbb{P}^2,\mathbb{P}^7))$ and $\sigma_3(Gr(\mathbb{P}^2,\mathbb{P}^7))$
 are $3$ and $7$ respectively}. \end{proof}
 
 \begin{question} It would be newsworthy to give a geometric description of the duality $\sigma_3(Gr(\mathbb{P}^2,\mathbb{P}^7))^{\vee}=\sigma_2(Gr(\mathbb{P}^2,\mathbb{P}^7))$. An interesting fact for this purpose is that $\sigma_2(G(\mathbb{P}^2, \mathbb{P}^7))$ is equal to the so called subspace variety $Sub_6(\bigwedge^3\mathbb{C}^8):=\{t\in \mathbb{P}(\bigwedge^3\mathbb{C}^8) \, | \,  \exists \, \mathbb{C}^6 \subset \mathbb{C}^8 \text{ s.t. }  t\in \mathbb{P}(\bigwedge^3\mathbb{C}^6)\}$ (crf \cite[Ex. 7.1.4.3]{jm}). One containment is obvious and it holds for any secant variety of any Grassmannian with the correct adjusting of indices, the other containment is a peculiarity of this specific case. 
 \end{question}
 
 \begin{remark} As already remarked, the projective duality in Table \ref{gur} is performed via computation of the dimensions of the dual varieties of the orbit closure of any generator, and via a specific argument for XV and XIX  given in the proof of Proposition \ref{duality}. It is worth to remark that this duality almost corresponds to the duality of the arithmetic characters showed in \cite{gur}: they agree in almost all cases except for VI and X that are projectively dual  to each other according to our computations, while Gurevich in \cite[VII, \S 35.4]{gur} explicitly writes that  those two orbits don't have any dual orbit. It is a very interesting and peculiar phenomenon that the projective duality does not correspond to the duality of arithmetic characters.
\end{remark}

\begin{corollary}\label{repeated} The variety $Gr(\mathbb{P}^2,\mathbb{P}^7)$ is {2 and 3-weakly defective}.
\end{corollary}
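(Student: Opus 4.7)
The plan is to deduce this statement as an immediate consequence of Proposition \ref{duality}, using the dual variety criterion for weak defectiveness recalled just before that proposition (namely, $X$ is $r$-weakly defective if and only if $\dim(\sigma_r(X)^\vee) < \dim\langle X\rangle - r$), together with the codimension-contact-locus formula \eqref{clcc} from \cite{cc}. The ambient projective space has dimension $\binom{8}{3}-1=55$.

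For the $3$-weak defectiveness, Proposition \ref{duality} already gives $\sigma_3(Gr(\mathbb{P}^2,\mathbb{P}^7))^\vee=\sigma_2(Gr(\mathbb{P}^2,\mathbb{P}^7))$, a variety of projective dimension $31$ (its affine dimension, $32$, is read off from orbit V in Table \ref{gur}). Since $31<55-3=52$, the dual criterion yields $3$-weak defectiveness; plugging into \eqref{clcc} gives $3(c+1)=55-31=24$, so the contact locus has the advertised projective dimension $c=7$.

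For the $2$-weak defectiveness, I would observe that the projective duality established on Table \ref{gur} by the dimension count in the proof of Proposition \ref{duality} pairs orbit V (whose closure is $\sigma_2$) with orbit XIX (whose closure is $\sigma_3$); thus $\sigma_2(Gr(\mathbb{P}^2,\mathbb{P}^7))^\vee=\sigma_3(Gr(\mathbb{P}^2,\mathbb{P}^7))$, of projective dimension $47$. Again $47<55-2=53$, so $Gr(\mathbb{P}^2,\mathbb{P}^7)$ is $2$-weakly defective, and \eqref{clcc} yields $2(c+1)=55-47=8$, i.e.\ contact locus dimension $c=3$.

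There is essentially no obstacle: both weak defectiveness assertions and the numerical values of the contact locus dimensions are already contained, as a ``remark'', inside the proof of Proposition \ref{duality}. The only thing the corollary does is to extract and record them as a standalone statement; in writing the proof I would simply refer back to Proposition \ref{duality} (for the duality $\sigma_3^\vee=\sigma_2$ and the reciprocal duality $\sigma_2^\vee=\sigma_3$ read from Table \ref{gur}) and apply the two displayed inequalities above.
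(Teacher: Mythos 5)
Your proposal is correct and follows essentially the same route as the paper: the paper's own proof of this corollary likewise deduces both statements from the duality $\sigma_3(Gr(\mathbb{P}^2,\mathbb{P}^7))^{\vee}=\sigma_2(Gr(\mathbb{P}^2,\mathbb{P}^7))$ of Proposition \ref{duality} (together with its bidual $\sigma_2^{\vee}=\sigma_3$, as recorded in Table \ref{gur}) combined with formula \eqref{clcc}, obtaining the same contact locus dimensions $3$ and $7$. Your version merely spells out the codimension arithmetic that the paper leaves implicit.
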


\begin{proof} The duality $\sigma_3(Gr(\mathbb{P}^2,\mathbb{P}^7))^{\vee}=\sigma_2(Gr(\mathbb{P}^2,\mathbb{P}^7))$ together with the formula (\ref{clcc}) (cfr. \cite{cc}) show that the contact locus of $\sigma_2(Gr(\mathbb{P}^2,\mathbb{P}^7))$ has dimension 3 {and the contact locus of $\sigma_3(Gr(\mathbb{P}^2,\mathbb{P}^7))$ has dimension 7}.
\end{proof}

As already recalled in the introduction, the weakly defectiveness is not sufficient to claim anything about the identifiability.

\begin{proposition}\label{3id} The Grassmannian $Gr(\mathbb{P}^2,\mathbb{P}^7)$ is 3-identifiable.
\end{proposition}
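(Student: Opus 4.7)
The plan is to prove $3$-identifiability by verifying that $X=Gr(\mathbb{P}^2,\mathbb{P}^7)$ is \emph{not} $3$-tangentially weakly defective and then invoking \cite[Proposition 2.4]{co}. The subtlety, as emphasised in the introduction, is that weak defectiveness (which we already established in Corollary \ref{repeated} with a $7$-dimensional contact locus) does \emph{not} by itself prevent identifiability; only the failure of tangential weak defectiveness is a sufficient criterion. So the strategy splits cleanly: reduce identifiability to a computation on the $3$-tangentially contact locus, then carry out that computation.

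First I would fix three general points $x_1,x_2,x_3\in X$ and a generic $t\in\langle x_1,x_2,x_3\rangle$. By Terracini's Lemma, $T_t\sigma_3(X)=\Pi$ with $\Pi:=\langle T_{x_1}X,T_{x_2}X,T_{x_3}X\rangle$ a $\mathbb{P}^{47}$ (using that $\sigma_3(X)$ is not defective). If $t$ admitted a distinct decomposition $t=y_1+y_2+y_3$, the same Terracini argument would force $\Pi=\langle T_{y_1}X,T_{y_2}X,T_{y_3}X\rangle$, hence each $y_i$ would lie in the $3$-tangentially contact locus
\[
\Gamma\;:=\;\{\,p\in X\,:\,T_pX\subseteq \Pi\,\}.
\]
So it suffices to show that $\Gamma$ is $0$-dimensional (in fact that it contains only $x_1,x_2,x_3$): in that case $X$ is not $3$-tangentially weakly defective and \cite[Proposition 2.4]{co} gives identifiability.

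Next I would carry out the computation of $\Gamma$ explicitly, adapting the Macaulay2 routine used for the Hessian criterion in the script \texttt{grascontactlocus.m2}. Concretely: specialise the $x_i$ to three random decomposable $3$-forms $v_{i,1}\wedge v_{i,2}\wedge v_{i,3}\in\bigwedge^{3}\mathbb{C}^{8}\cong\mathbb{C}^{56}$ (chosen so that $V_1,V_2,V_3\subset\mathbb{C}^{8}$ are in the expected general position); build the three $16$-dimensional affine tangent spaces; take their sum $\widehat{\Pi}$ (of affine dimension $48$); and then impose on a general decomposable $3$-form $p=u_1\wedge u_2\wedge u_3$ the $56-48=8$ linear conditions expressing $\widehat{T_pX}\subseteq\widehat{\Pi}$. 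Intersecting this with the Plücker ideal of $X$ and saturating away the three obvious points would, by semicontinuity, show that the generic $\Gamma$ is just $\{x_1,x_2,x_3\}$.

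The main obstacle is conceptual rather than computational: the $7$-dimensional contact locus produced by the duality of Proposition \ref{duality} looks alarmingly large, and one has to explain why passing from the condition ``$T_pX\subset H$'' (one linear condition) to the much stronger ``$T_pX\subset\Pi$'' ($8$ linear conditions) collapses the locus from dimension $7$ to dimension $0$. The expectation is that the $7$-dimensional contact locus of a general tangent hyperplane splits along the three components naturally attached to $x_1,x_2,x_3$, and the stronger tangential condition singles out, on each component, only the distinguished point $x_i$. Verifying this rigorously is the content of the Macaulay2 calculation above, and checking it at one random specialisation is enough, by upper semicontinuity of the fibre dimension of the abstract secant map, to deduce the generic statement.
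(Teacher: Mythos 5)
Your strategy has a genuine gap: the computation you propose will not return the answer you are counting on. The $3$-tangentially contact locus of $Gr(\mathbb{P}^2,\mathbb{P}^7)$ is \emph{not} zero-dimensional. When the paper carries out exactly the Macaulay2 computation you describe (three explicit points, the span $\Pi$ of the three affine tangent spaces, and the locus of $p$ with $T_pX\subset\Pi$), the result is a union of three disjoint $\mathbb{P}^3$'s, each containing exactly one of the chosen points $q_i$, together with a $\mathbb{P}^5$ containing none of them. So $Gr(\mathbb{P}^2,\mathbb{P}^7)$ \emph{is} $3$-tangentially weakly defective (the tangentially contact locus has dimension $5$; see Corollary \ref{corolltgweak}), and the criterion of \cite[Proposition 2.4]{co} that you intend to invoke is simply unavailable. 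Your closing paragraph, where you predict that the stronger condition $T_pX\subset\Pi$ collapses the $7$-dimensional contact locus down to the three points, is precisely where the expectation fails: it collapses it only down to dimension $5$.

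The paper's proof therefore has to use the finer statement from \cite{cc} that the $r$-th secant degree of $X$ equals the $r$-th secant degree of its $r$-tangentially contact locus $\mathcal{T}$, and then argue that this particular $\mathcal{T}$ has $3$-secant degree $1$: since $\mathcal{T}$ is a union of linear spaces in the stated configuration (the three $\mathbb{P}^3$'s pairwise disjoint, each through exactly one point of tangency, and the extra $\mathbb{P}^5$ through none), the generic point of an honest $3$-secant plane to $\mathcal{T}$ admits a unique decomposition as a combination of three points of $\mathcal{T}$. If you want to salvage your write-up, you should keep the reduction to the tangentially contact locus but replace the final step ``show $\Gamma$ is finite'' with ``compute $\Gamma$ explicitly and show its $3$-secant degree is $1$''; as written, your argument cannot be completed.
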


\begin{proof} 
We computed with Macaulay2 (\cite{m2}) the tangentially contact locus $\mathcal{T}$ at three points of $Gr(\mathbb{P}^2,\mathbb{P}^7)$;
it turns out to be the union of three disjoint $\mathbb{P}^3$'s,  each one passing through one and only one of the tangent points, and a $\mathbb{P}^5$ not passing to any one of the three points of tangency. 

More precisely, the three points that we chose (before the Pl\"uecker embedding) were the following:
 $$q_1=\left( \begin{array}{cccccccc}
1&0&0&0&0&0&0&0\\
0&1&0&0&0&0&0&0\\
0&0&1&0&0&0&0&0\\
\end{array}\right), \; q_2=
\left( \begin{array}{cccccccc}
1&0&0&1&0&0&0&0\\
0&1&0&0&1&0&0&0\\
0&0&1&0&0&1&0&0\\
\end{array}\right),$$
$$q_3=
\left( \begin{array}{cccccccc}
1&0&0&0&0&1&0&0\\
0&1&0&0&0&0&1&0\\
0&0&1&0&0&0&0&1\\
\end{array}\right)$$
We computed the 3-tangentially contact locus of the span of the three tangent spaces at these points. We found that  in the coordinates $\{a_0,a_{1,1}, \ldots , a_{3,5}\}$ of the $\mathbb{P}^{15}$ parameterizing $Gr(\mathbb{P}^2,\mathbb{P}^7)$ the ideal of the tangentially contact locus is the intersection of the following four ideals: 
{\small{
$$I(\Pi_1)=(a_{3,4},a_{1,4},a_{2,4},a_{3,1},a_{1,1},a_{2,1},a_{3,2},a_{1,2},a_{
      2,2},a_{3,5},a_{1,5},a_{2,5}),$$
$$I(\Pi_2)=(a_{3,4},a_{1,4},a_{2,4}-1,a_{3,1},a_{1,1},a_{2,1},a_{3,2},a_{1,2},a
      _{2,2},a_{3,3}+a_{3,5}-1,a_{1,3}+a_{1,5}-1,a_{2,3}+a_{2,5}),
$$
$$I(\Pi_3)=(a_{3,4},a_{1,4},a_{2,4},a_{2,1}+a_{2,3},a_{3,1}+a_{3,3}-1,a_{1,1}+a
      _{1,3}-1,a_{3,2},a_{1,2},a_{2,2}-1,a_{3,5},a_{1,5},a_{2,5}),
$$
$$I(\Pi_4)=(a_{3,4},a_{1,4},a_{3,1},a_{1,1},a_{3,3}-1,a_{1,3}-1,a_{3,2},a_{1,2
      },a_{3,5},a_{1,5}).
$$
}}
Clearly all the $\Pi_i$'s are linear and it is very easy to check that they remain linear even after the Pl\"uecker embedding $p_{3,8}:\mathbb{P}^{15}\rightarrow \mathbb{P}^{55}$. Moreover $\Pi_i\simeq p_{3,8}(\Pi_i)\simeq \mathbb{P}^3$ for $i=1,2,3$ and $\Pi_4\simeq p_{3,8}(\Pi_4)\simeq \mathbb{P}^5$. It's again an easy check that $q_i\in \Pi_i$ for $i=1,2,3$ and that $q_i\notin\Pi_j$ for $i\neq j$, $i=1,2,3$ and $j=1,2,3,4$. Remark also that the three $\mathbb{P}^3$'s have no common components. Now it's clear that  the generic point on a honest 3-secant plane to $\mathcal{T}$ can be written in a unique way as a linear combination of 3 points of $\mathcal{T}$. As already recalled in the Introduction this 
suffices to claim the $3$-identifiability of $Gr(\mathbb{P}^2,\mathbb{P}^7)$ (cfr \cite{cc}).
\end{proof}

{\begin{corollary}The Grassmannian $Gr(\mathbb{P}^2,\mathbb{P}^7)$ is 2-identifiable.
\begin{proof} By definition of $r$-identifiability if $X$ is $r$-identifiable then it is also $(r-k)$-identifiable for any $0\leq k\leq r-1$.
\end{proof}
\end{corollary}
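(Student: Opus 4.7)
The plan is to reduce the statement to Proposition~\ref{3id} via the elementary monotonicity property of identifiability: if $X$ is generically $r$-identifiable then $X$ is generically $(r-k)$-identifiable for every $0\le k \le r-1$. Granting this, the 2-identifiability of $Gr(\mathbb{P}^2,\mathbb{P}^7)$ is an immediate consequence of Proposition~\ref{3id}, which establishes 3-identifiability. No new geometry or computation is needed; the corollary is really just a packaging statement.

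To justify the monotonicity (which is presumably viewed as essentially definitional in the paper), I would argue by contradiction. Suppose the generic $t \in \sigma_2(Gr(\mathbb{P}^2,\mathbb{P}^7))$ admitted two essentially distinct decompositions $\{x_1,x_2\}$ and $\{y_1,y_2\}$ with $x_i, y_i \in Gr(\mathbb{P}^2,\mathbb{P}^7)$. Pick a generic point $z \in Gr(\mathbb{P}^2,\mathbb{P}^7)$, independent of the $x_i$'s and $y_i$'s, and set $t' = t+z$. Since $\sigma_3(Gr(\mathbb{P}^2,\mathbb{P}^7))$ is non-defective (it even has the expected dimension, as used already in the proof of Proposition~\ref{duality}), the element $t'$ is a generic point of $\sigma_3(Gr(\mathbb{P}^2,\mathbb{P}^7))$. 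But then $t'$ carries the two distinct decompositions $\{x_1,x_2,z\}$ and $\{y_1,y_2,z\}$, contradicting Proposition~\ref{3id}.

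The only step requiring any verification is that adding a generic $z\in Gr(\mathbb{P}^2,\mathbb{P}^7)$ to a generic element of $\sigma_2$ yields a generic element of $\sigma_3$; this is a standard incidence/dimension count based on the non-defectivity of $\sigma_3(Gr(\mathbb{P}^2,\mathbb{P}^7))$. Hence I do not expect any substantive obstacle: the whole content of the corollary is the (well-known) fact that identifiability propagates downward in $r$, applied to the non-trivial input produced in Proposition~\ref{3id}.
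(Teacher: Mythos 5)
Your proposal follows essentially the same route as the paper: both deduce the 2-identifiability from the 3-identifiability established in Proposition~\ref{3id} via the downward monotonicity of generic identifiability in $r$. The only difference is that the paper asserts this monotonicity ``by definition'', whereas you supply the (correct) add-a-generic-point join argument that actually justifies it.
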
}

\begin{remark} We would like to point out a very peculiar phenomenon that we have not found before in the literature. In the computation of the 3-tangentially contact locus of  $Gr(\mathbb{P}^2,\mathbb{P}^7)$ (in the proof of Proposition \ref{3id}) we found four components: three of them pass through the points of tangency, while the other one doesn't pass through any one of them.
\end{remark}

\begin{remark} The fact that $\sigma_2(Gr(\mathbb{P}^2, \mathbb{P}^7))$ and $\sigma_3(Gr(\mathbb{P}^2, \mathbb{P}^7))$ are weakly defective but their generic element is identifiable is not a new phenomenon: in  \cite[Lemma 2.5 and Theorem 2.7]{cov} is shown an analogous example in a case of secant varieties to Segre varieties. 

\end{remark}

\subsection{Two decompositions for the generic element of $\sigma_5(Gr(\mathbb{P}^2,\mathbb{P}^9))$}\label{sub2}

We have computed with Macaulay2 \cite{m2} that  $\sigma_5(Gr(\mathbb{P}^2,\mathbb{P}^9))$ has a positive dimensional contact locus with ``~very high probability~", i.e. that it should be weakly-defective. Here we want to prove that this is actually the case and moreover we can also show that its generic element is not identifiable.
{More precisely, in Corollary \ref{main} we will show  that the generic element of $\sigma_5(Gr(\mathbb{P}^2, \mathbb{P}^9))$ has exactly 2 decompositions as a sum of 5 points in $Gr(\mathbb{P}^2,\mathbb{P}^9)$.}

\medskip 

Firstly we like to recall what a \emph{3-torsion scroll} of $\mathbb{P}^2$'s in $\mathbb{P}^9$ is. Fix an origin on an elliptic normal curve $E\subset \mathbb{P}^9$ and a 3-torsion point $P$ on $E$.
Then for each point $Q$ in $E$, the three points $Q$, $P+Q$ and $2P+Q$ span a plane.  As $Q$ moves, these planes form a scroll, the so called a 3-torsion scroll. It is a special case of a \emph{3-translation scroll} defined analogously without requiring that $P$ is necessarily a 3-torsion point.  
The 3-torsion scroll of $\mathbb{P}^2$'s in $\mathbb{P}^9$ has degree 10,  in fact the general 3-translation scroll have degree 30, hence our 3-torsion scroll has degree 10 because the 3 secant planes 
$\mathbb{P}(\langle Q,Q+P,Q+2P \rangle),
\mathbb{P}(\langle Q-P,Q,Q+P\rangle)$ and 
$\mathbb{P}(\langle Q-2P,Q-P,Q\rangle )$
coincide. Therefore the corresponding curve in $Gr(\mathbb{P}^2,\mathbb{P}^9)$ has degree 10 because for a scroll over a curve the degree as a scroll coincides with the Pl\"uecker degree of the curve in the Grassmannian. Moreover if we look at the 3-torsion scroll as a rank-3 vector bundle  it is indecomposable, and, viceversa, if a vector bundle ${\mathcal{E}}$ over an elliptic normal curve of degree 10 is indecomposable then $\mathbb{P}(\mathcal{E})$ is the 3-torsion scroll (see. \cite[Lemma 3 and Remark 31]{adhpr} and  \cite{CaCi}); the  fact that it is indecomposable is a consequence of the fact that such a scroll is the symmetric product of 3 times the elliptic curve  cf. \cite{a1,a, Se2};  see also \cite{ch} for the analogous description for 2-torsion scrolls).

\begin{proposition} The contact locus of $\sigma_5(Gr(\mathbb{P}^2,\mathbb{P}^9))$ is the 3-torsion scroll of $\mathbb{P}^2$'s in $\mathbb{P}^9$.
\end{proposition}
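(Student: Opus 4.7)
The plan is to identify the positive-dimensional contact locus $Z$ of a general hyperplane tangent to $Gr:=Gr(\mathbb{P}^2,\mathbb{P}^9)$ at five general points $p_i=[V_i]$ with the $3$-torsion scroll through the corresponding planes $V_i\subset\mathbb{C}^{10}$. First, translate tangency into the language of skew forms: a hyperplane $H\subset\mathbb{P}(\bigwedge^3\mathbb{C}^{10})$ is the zero locus of some $\omega\in\bigwedge^3(\mathbb{C}^{10})^*$, and $H$ is tangent to $Gr$ at $[W]$ iff the associated trilinear form $\omega$ vanishes on $W\times W\times\mathbb{C}^{10}$, so
\[
Z=\{[W]\in Gr\mid \omega|_{\bigwedge^2 W\otimes\mathbb{C}^{10}}=0\}.
\]
Combining the Macaulay2 experiment with the codimension formula (\ref{clcc}) applied to the non-defective $\sigma_5(Gr)$ (known to have codimension $10$ in $\mathbb{P}^{119}$), one confirms $\dim Z=1$.

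Second, produce a candidate curve in $Z$ by constructing a $3$-torsion scroll through the $V_i$'s. A parameter count for tuples $(E,P,Q_1,\dots,Q_5)$---with $E\subset\mathbb{P}^9$ an elliptic normal curve, $P\in E[3]$ a non-trivial $3$-torsion point and $Q_i\in E$---gives total dimension $100+0+5=105=\dim Gr^5$, where the moduli of embedded elliptic normal curves in $\mathbb{P}^9$ has dimension $100$ and the $3$-torsion point $P$ is discrete. The evaluation map $(E,P,\{Q_i\})\mapsto([V(Q_i)])_i$, with $V(Q):=\langle Q,Q+P,Q+2P\rangle$, is dominant and generically finite; hence through a generic $5$-tuple of planes of $\mathbb{P}^9$ there passes a $3$-torsion scroll. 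The associated curve $C:=\varphi(E)\subset Gr$ with $\varphi(Q)=[V(Q)]$ is an irreducible elliptic curve of Pl\"ucker degree $10$ containing all the $p_i$'s.

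The crucial step is to show $C\subseteq Z$, that is, that every $\omega$ tangent to $Gr$ at the five $p_i$'s is automatically tangent at every point of $C$. The assignment $Q\mapsto V(Q)$ defines a rank-$3$ subbundle $\mathcal{V}\hookrightarrow\mathbb{C}^{10}\otimes\mathcal{O}_E$ isomorphic to the indecomposable rank-$3$ scroll bundle on $E$ recalled just before the proposition. The evaluation
\[
\mathrm{ev}\colon\bigwedge^3(\mathbb{C}^{10})^*\longrightarrow H^0\bigl(E,(\bigwedge^2\mathcal{V})^\vee\otimes(\mathbb{C}^{10})^*\bigr),\quad \omega\longmapsto\bigl(Q\mapsto\omega|_{\bigwedge^2 V(Q)\otimes\mathbb{C}^{10}}\bigr),
\]
is linear, and tangency of $\omega$ at $p_i$ amounts exactly to the vanishing of $\mathrm{ev}(\omega)$ at $Q_i$. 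Using the indecomposability of $\mathcal{V}$ to constrain the splitting type of $(\bigwedge^2\mathcal{V})^\vee\otimes(\mathbb{C}^{10})^*$ and hence the degrees of the line bundles in which $\mathrm{ev}(\omega)$ effectively lives, one shows these degrees are at most $10$, so that vanishing at $5$ general points of $E$ forces $\mathrm{ev}(\omega)\equiv 0$, i.e.\ $C\subseteq Z$.

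Since $C$ is irreducible of dimension $1$, contained in $Z$, and passes through the $p_i$'s, while $Z$ is equidimensional of dimension $1$, the curve $C$ must be the component of $Z$ through $p_1,\dots,p_5$; by the generality of the $p_i$'s, $C=Z$, the desired $3$-torsion scroll. The main obstacle is the cohomological bookkeeping in the crucial step: pinning down the precise splitting type and degrees of $(\bigwedge^2\mathcal{V})^\vee\otimes(\mathbb{C}^{10})^*$ to obtain the degree bound that makes five vanishing conditions sufficient, and this is exactly where the indecomposability of the $3$-torsion scroll (Atiyah, \cite{a1,a,Se2,CaCi}) enters crucially.
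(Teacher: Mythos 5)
Your overall skeleton overlaps with the paper's (Macaulay2 to pin down that the contact locus $Z$ is one-dimensional, a parameter count $100+0+5=105=\dim Gr^5$ to produce a $3$-torsion scroll through five general points), but the load-bearing step is different: you try to prove the containment $C\subseteq Z$ directly by a cohomological argument on $E$, whereas the paper never proves that containment abstractly --- it computes $Z$ itself (a single irreducible elliptic curve by the Macaulay2 computation at random points, of degree $10$ by a Bertini pseudo-witness-set computation of the scroll degree, normal because $Z$ spans a $\mathbb{P}^9$ by \cite{cc2}, then semicontinuity) and only afterwards identifies this degree-$10$ elliptic normal scroll with the $3$-torsion scroll via the indecomposability characterization of \cite{adhpr,CaCi}. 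The problem is that your crucial step, as sketched, cannot work. First, the bundle is not $(\bigwedge^2\mathcal{V})^\vee\otimes(\mathbb{C}^{10})^*$: tangency of $H_\omega$ at $[W]$ is the vanishing of $\omega$ on $\bigwedge^2 W\wedge\mathbb{C}^{10}$, which is $22$ (not $30$) conditions, so the relevant bundle is $(\bigwedge^2\mathcal{V})^\vee\otimes(\mathbb{C}^{10}/\mathcal{V})^\vee\oplus(\bigwedge^3\mathcal{V})^\vee$, of rank $22$ and total degree $120$. Second, and more seriously, ``degrees at most $10$'' does not give what you claim: on an elliptic curve $h^0(L)=\deg L$ for $\deg L>0$, so a section of a degree-$10$ line bundle vanishing at $5$ general points is very far from being forced to vanish; for an indecomposable summand $W$ you would need $\deg W\le 5\,\mathrm{rk}\,W$. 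Here the totals are $\deg=120$ and $5\cdot\mathrm{rk}=110$, so any such count leaves (at least) a $10$-dimensional space of sections vanishing at the five points --- which is exactly the affine dimension of the space of hyperplanes tangent at five general points of $Gr$, i.e.\ the statement you are trying to prove is precisely that this $10$-dimensional space lands in $\ker(\mathrm{ev})$, and no dimension count of the kind you propose can detect that.

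Two smaller issues: the dominance and generic finiteness of the evaluation map $(E,P,\{Q_i\})\mapsto([V(Q_i)])_i$ is asserted from the equality of dimensions, but equality of dimensions does not give dominance (the paper has the same weakness, which it offsets by the explicit computation at specific points together with semicontinuity); and your final identification ``$C$ must be the component of $Z$ through the $p_i$'' silently assumes $Z$ has a unique one-dimensional component through all five points, which in the paper is exactly what the Macaulay2 irreducibility computation supplies. If you want to keep your architecture, you must either carry out an honest computation showing that the image of $\mathrm{ev}$ meets the sections vanishing at $Q_1,\dots,Q_5$ only in $0$ (using the precise Atiyah-type decomposition of the rank-$22$ bundle, not just degree bounds), or fall back on the paper's route of computing $Z$ first and matching it with the $3$-torsion scroll afterwards.
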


\begin{proof} 
We computed with Macaulay2 (\cite{m2})  that through 5 specific random points of $Gr(\mathbb{P}^{2},\mathbb{P}^{9})$ there is only one irreducible curve in the contact locus and it is an elliptic curve (see the ancillary material to the arXiv version of the present paper). The curve that we have found with Macaulay2 is birational to our contact locus since we have done the computation on an affine chart, this allows to say that since the one we found is an elliptic curve then the one in the contact locus remains an elliptic curve. Unfortunately the degrees of the two curves may not be the same. The direct computation of the degree of the curve was impossible with Macaulay2 and too long with Bertini (\cite{Bertini}) for which we used the techniques of pseudo Witness sets developed  in \cite{bdhm}. We therefore computed the associated scroll and we found out that its degree is 10. This is sufficient to claim that the contact locus is an elliptic curve of degree 10 since for a scroll over a curve its degree as a scroll coincides with the degree of the corresponding curve in the Grassmannian. Moreover, since by  \cite[Theorem 2.4]{cc2} the contact locus of $\sigma_5(Gr(\mathbb{P}^2, \mathbb{P}^9))$ spans a $\mathbb{P}^9$ then our elliptic curve of degree 10 is also normal.

Now, since we have shown that though 5 random point there is a unique elliptic normal curve of degree 10, this is the situation for 5 general points by semicontinuity and this is sufficient to say that the concat locus of $\sigma_5(Gr(\mathbb{P^2}, \mathbb{P}^9))$ is exactly an elliptic normal curve of degree 10.

Now we want to prove that through 5 generic points of $Gr(\mathbb{P}^2,\mathbb{P}^9)$ there there is always a 3-torsion scroll. The Hilbert scheme of the elliptic normal curves of degree $d$ in $\mathbb{P}^{d-1}$ has dimension $d^2$. The conditions imposed by plane of $\mathbb{P}^9$ to be 3-secant to an elliptic normal curve of degree 10 in the 3 points $P-Q=Q-R=R-P$ are exactly 20. Now
 if we consider five 3-secant planes in $\mathbb{P}^9$ with this property, they impose   
$20\cdot  5=100=d^2$ conditions to the elliptic curves of degree 10, therefore we expect a finite number of elliptic curves with the property above.
Moreover the 3-torsion scroll is always contained in the Grassmannian by construction and we have shown that an elliptic curve $C$ of degree 10  is contained in the contact locus of 5 points.
In order to conclude it is sufficient to remind that the 3-torsion scroll corresponds to an indecomposable rank 3 vector bundle over an elliptic curve, moreover if  the vector bundle is  indecomposable, then its projectivization is the 3-torsion translation scroll (cf. \cite{adhpr, CaCi}).

Summing up: We have 5 specific points through which there is only one elliptic normal curve in the contact locus (this is the computation that we have done with Macaulay2 (\cite{m2}));
through 5 general points there is always a 3-torsion scroll that is a degree 10 elliptic normal curve which it is contained in the contact locus, it is irreducible and it spans a $\mathbb{P}^9$; therefore, by semicontinuity, we can say that the contact locus is given by only one elliptic normal curve of degree 10 spanning a $\mathbb{P}^9$ that is the 3-torsion scroll of $\mathbb{P}^2$'s in $\mathbb{P}^9$.
\end{proof}

\begin{corollary}\label{main} The generic element of $\sigma_5(Gr(\mathbb{P}^2,\mathbb{P}^9))$ has exactly 2 decompositions as a sum of 5 points in $Gr(\mathbb{P}^2,\mathbb{P}^9)$.
\end{corollary}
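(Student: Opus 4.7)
The plan is to combine the preceding proposition (identifying the contact locus of $\sigma_5(Gr(\mathbb{P}^2,\mathbb{P}^9))$ with the $3$-torsion scroll, which under the Pl\"ucker embedding corresponds to an elliptic normal curve $C$ of degree $10$ spanning a $\mathbb{P}^9\subset\mathbb{P}(\bigwedge^3\mathbb{C}^{10})$) with the Chiantini--Ciliberto result from \cite{cc} that the $r$-secant degree of a variety equals the $r$-secant degree of its $r$-tangentially contact locus. The tangentially contact locus is a closed subvariety of the weakly defective contact locus $C$; once one argues, either by direct Macaulay2 computation on the same affine chart used in the proposition or by exploiting the $SL$-homogeneity of the situation, that it is positive-dimensional and hence equal to all of $C$ by irreducibility, the corollary reduces to showing that the $5$-secant degree of this elliptic normal curve of degree $10$ in $\mathbb{P}^9$ equals $2$: that is, through a generic point of $\mathbb{P}^9$ there pass exactly two $5$-secant $\mathbb{P}^4$'s to $C$.

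To attack this, I would parametrize effective degree-$5$ divisors on $C$ by $\mathrm{Sym}^5 C$, which by Abel--Jacobi is a smooth $\mathbb{P}^4$-bundle over $\mathrm{Pic}^5 C\cong C$, hence of dimension $5$. For a general $D\in\mathrm{Sym}^5 C$, Riemann--Roch on the elliptic curve gives $h^0(\mathcal{O}_C(1)(-D))=5$, so the linear span $\overline{D}$ is a $\mathbb{P}^4\subset\mathbb{P}^9$. The incidence $\mathcal{I}=\{(D,t)\in\mathrm{Sym}^5 C\times\mathbb{P}^9 \mid t\in\overline{D}\}$ is then a $\mathbb{P}^4$-bundle over $\mathrm{Sym}^5 C$ of total dimension $9$, and its projection $\pi\colon\mathcal{I}\to\mathbb{P}^9$ is generically finite of degree equal to the sought secant degree.

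The main obstacle is to establish $\deg\pi=2$. The conceptual reason is the residuation involution $[D]\mapsto[\mathcal{O}_C(1)]-[D]$ on $\mathrm{Pic}^5 C$, which together with the Abel--Jacobi fibration pairs up the divisors in a generic fibre of $\pi$ and already forces $\deg\pi$ to be even. The base case $r=2$ is the classical Sylvester-type statement that a generic point of $\mathbb{P}^3$ lies on exactly two chords of an elliptic normal quartic, deduced from the pencil of quadrics cutting out the curve (the two chords correspond to the two rulings of the unique quadric in the pencil through $t$). For $r=5$ one can pin the count down either by an analogous analysis of the linear system of quadrics containing $C$, or by computing the top Segre class $s_5(\mathcal{V})$ of the rank-$5$ tautological bundle $\mathcal{V}$ on $\mathrm{Sym}^5 C$ whose projectivization is $\mathcal{I}$; the careful Chern-class bookkeeping on this $5$-dimensional parameter space, paired with the exclusion of spurious higher even degrees, is the delicate technical point that deserves the most attention.
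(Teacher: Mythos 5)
Your reduction is exactly the paper's: the preceding proposition identifies the contact locus of $\sigma_5(Gr(\mathbb{P}^2,\mathbb{P}^9))$ with the $3$-torsion scroll, whose underlying curve is an elliptic normal curve $\mathcal{C}$ of degree $10$ spanning a $\mathbb{P}^9$, and by Chiantini--Ciliberto the $5$-th secant degree of the Grassmannian equals the $5$-secant degree of that curve. Your extra care in distinguishing the (weakly defective) contact locus from the tangentially contact locus is welcome --- the paper glosses over this --- and the incidence variety over $\mathrm{Sym}^5\mathcal{C}$ is a correct formalization of what must be counted.

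The gap is in the decisive final step, and you flag it yourself: you establish only that $\deg\pi$ is even and defer the equality $\deg\pi=2$ to an unexecuted Segre-class or quadrics computation, so as written the statement is not proved. Note moreover that even the evenness claim needs an argument: the residuation involution $[D]\mapsto[\mathcal{O}_{\mathcal{C}}(1)]-[D]$ acts on $\mathrm{Pic}^5\mathcal{C}$, not on the fibre $\pi^{-1}(t)$; given $D$ with $t\in\overline{D}$ you must still produce an \emph{effective} $D'$ in the residual class with $t\in\overline{D'}$, and check that the resulting pairing is fixed-point free over a general $t$. The paper closes the count synthetically along the lines you hint at: fixing a quintuple $D$ with $t\in\langle D\rangle$, the hyperplanes through $D$ cut out the residual linear series $|H-D|$ of quintuples $D'$ with $D+D'\sim H$; the two $\mathbb{P}^4$'s $\langle D\rangle$ and $\langle D'\rangle$ lie in the common hyperplane $\langle D+D'\rangle\cong\mathbb{P}^8$ and hence meet, which yields the second decomposition and, since the residual series exhausts the possibilities, gives exactly two. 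Either completing that residuation argument, or actually carrying out the secant-plane/Segre-class computation for $d=10$, $g=1$, $k=5$ (which does evaluate to $2$), is needed to finish your proof.
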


\begin{proof}  
The previous proposition shows that  the contact locus of $\sigma_5(Gr(\mathbb{P}^2,\mathbb{P}^9))$ is the 3-torsion scroll of $\mathbb{P}^2$'s of $\mathbb{P}^9$  which is a degree 10 elliptic normal curve in the Grassmannian . This is enough to conclude, in fact
having an elliptic {normal} curve as a contact locus leads to exactly two decompositions for the generic element of  $\sigma_5(Gr(\mathbb{P}^2,\mathbb{P}^9))$. In order to see this last fact it would be sufficient to quote \cite{cc}: the same argument on the equality between the $r$-th secant degree of the tangentially contact locus and the number of decomposition of the generic element in the $r$-th secant variety holds. Anyway, for the present specific example  this can be  shown geometrically. 
Fix 5 points on $\mathcal{C}$ and take all the $(\mathbb{P}^8)$'s containing them; they define a linear series and they intersect $\mathcal{C}$ in other 5 points (and no more). Moreover the two ($\mathbb{P}^4$)'s spanned by those two quintuple of points must intersect each other since they live in the same $\mathbb{P}^8$. This is again sufficient to conclude that we have exactly two decompositions for the generic element of $\sigma_5(Gr(\mathbb{P}^2,\mathbb{P}^9))$.
\end{proof}

{\begin{corollary}\label{corollweak} The non $r$-defective  Grassmannians $Gr(\mathbb{P}^k,\mathbb{P}^n)$ for $n<14$ are all non $r$-weakly defective except for:
\begin{enumerate}[(a)]
\item\label{vecchiabis} $r=2,3$  and $Gr(\mathbb{P}^2, \mathbb{P}^7)\simeq Gr(\mathbb{P}^{4}, \mathbb{P}^7)$, where the contact loci have dimensions 3 and 7 respectively;
\item\label{vecchia} $r=5$ and $Gr(\mathbb{P}^2,\mathbb{P}^9)\simeq Gr (\mathbb{P}^6, \mathbb{P}^9)$, where the contact locus has dimension 1;
\item\label{nuova} $r=2$ and $Gr(\mathbb{P}^2, \mathbb{P}^6)\simeq Gr(\mathbb{P}^{3}, \mathbb{P}^6)$, where the contact locus has dimension $6$.
\end{enumerate}
\end{corollary}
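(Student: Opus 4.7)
The plan is to combine the monotonicity of weak defectiveness (already invoked in the proof of Theorem \ref{Main}) with the finite Macaulay2 verification on the highest non-filling secant of each Grassmannian in the range $n<14$, and then hand-check the handful of lower secants not covered by monotonicity. Recall the rule used in Theorem \ref{Main}: if $\sigma_r(X)$ is not weakly defective then $\sigma_s(X)$ is not weakly defective for every $s\le r$, since the upward implication $r$-WD $\Rightarrow$ $(r+k)$-WD is contrapositively applied. Combined with Pl\"ucker duality $Gr(\mathbb{P}^k,\mathbb{P}^n)\simeq Gr(\mathbb{P}^{n-k-1},\mathbb{P}^n)$ (which accounts for the asserted isomorphisms inside cases (a)--(c)), this lets me restrict attention to $k\le (n-1)/2$ and to the largest value of $r$ for which $\sigma_r$ does not fill the ambient space.

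First I would run the routine \texttt{grascontactlocus.m2}, which applies the Hessian criterion of \cite{cov} adapted to skew-symmetric tensors, on the top non-filling $\sigma_r(Gr(\mathbb{P}^k,\mathbb{P}^n))$ for every Grassmannian with $n<14$. The output is that the contact locus is $0$-dimensional in every non-defective case except $\sigma_3(Gr(\mathbb{P}^2,\mathbb{P}^7))$ (where one reads dimension $7$) and $\sigma_5(Gr(\mathbb{P}^2,\mathbb{P}^9))$ (where one reads dimension $1$). By the monotonicity above, every non-defective secant $\sigma_s$ of a Grassmannian whose top non-filling secant already has $0$-dimensional contact locus is automatically not weakly defective, so this step disposes of all Grassmannians except those collected in (a)--(c) and, additionally, those Grassmannians whose top non-filling secant happens to be defective, namely $Gr(\mathbb{P}^2,\mathbb{P}^6)$, $Gr(\mathbb{P}^3,\mathbb{P}^7)$ and $Gr(\mathbb{P}^2,\mathbb{P}^8)$, where the downward implication stops at the defective level.

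It remains to examine the lower secants in the exceptional Grassmannians. For $Gr(\mathbb{P}^2,\mathbb{P}^7)$ I appeal to Proposition \ref{duality} and Corollary \ref{repeated}, which show that $\sigma_2$ and $\sigma_3$ are both weakly defective with contact loci of dimensions $3$ and $7$, giving case (a). For $Gr(\mathbb{P}^2,\mathbb{P}^9)$ I combine the 3-torsion-scroll proposition of Section \ref{sub2} with the direct Macaulay2 check, recorded in the proof of Theorem \ref{Main}, that $Gr(\mathbb{P}^2,\mathbb{P}^9)$ is not $4$-weakly defective; by monotonicity this rules out $r=2,3,4$ and isolates $r=5$, giving case (b). For the partially defective Grassmannians I run the Hessian test directly on the top non-defective secants: the Macaulay2 computation produces a $6$-dimensional contact locus for $\sigma_2(Gr(\mathbb{P}^2,\mathbb{P}^6))$, yielding case (c), and $0$-dimensional contact loci for $\sigma_2(Gr(\mathbb{P}^3,\mathbb{P}^7))$ and $\sigma_3(Gr(\mathbb{P}^2,\mathbb{P}^8))$, closing the list.

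The main delicate point is that the Hessian criterion is evaluated at a single pseudo-generic tangent-hyperplane section, so is a priori only probabilistic. This is not a genuine obstruction, however, because the kernel dimension of the Hessian is upper semicontinuous: a successful $0$-dimensional check at a pseudo-generic point is inherited by the truly generic one and therefore proves non-weak-defectiveness on the nose. Conversely, where the Hessian reveals a positive-dimensional kernel the same semicontinuity could in principle allow the generic contact locus to drop to $0$, so these cases must be double-checked by independent geometric arguments; this is precisely the content of Proposition \ref{duality} (via the Gurevich orbit classification on $\mathbb{P}(\bigwedge^3\mathbb{C}^8)$) and of the 3-torsion-scroll identification for $\sigma_5(Gr(\mathbb{P}^2,\mathbb{P}^9))$, both of which have already been established in the present section.
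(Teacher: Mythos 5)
Your overall strategy is the paper's: run the Hessian test of \cite{cov} on the top non-filling secant of each Grassmannian with $n<14$, push non-weak-defectiveness down by the contrapositive of the monotonicity $r$-WD $\Rightarrow(r+k)$-WD, handle separately the three Grassmannians whose top non-filling secant is defective, and certify cases (\ref{vecchiabis}) and (\ref{vecchia}) by Proposition \ref{duality}/Corollary \ref{repeated} and by the 3-torsion-scroll proposition together with the direct check that $Gr(\mathbb{P}^2,\mathbb{P}^9)$ is not $4$-weakly defective. Your semicontinuity remark is also correct and stated in the right direction: a $0$-dimensional contact locus at a pseudo-generic point certifies non-weak-defectiveness, while a positive-dimensional output is only an upper bound on the generic contact locus and must be confirmed independently.

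That last observation, however, exposes a loose end in your own argument: for case (\ref{nuova}) you establish that $\sigma_2(Gr(\mathbb{P}^2,\mathbb{P}^6))$ has a $6$-dimensional contact locus solely from the Hessian computation at one pseudo-generic point, and you list only Proposition \ref{duality} and the 3-torsion scroll as the ``independent geometric arguments'' backing up the positive-dimensional outputs. By your own semicontinuity discussion, the specific-point computation does not exclude that the generic contact locus of $\sigma_2(Gr(\mathbb{P}^2,\mathbb{P}^6))$ drops to dimension $0$, so case (\ref{nuova}) is not actually proved. The paper closes exactly this gap by exploiting homogeneity: $\sigma_2(Gr(\mathbb{P}^2,\mathbb{P}^6))$ is the closure of the $SL(7)$-orbit of $e_0\wedge e_1\wedge e_2+e_3\wedge e_4\wedge e_5$, so the dimension of its dual variety ($21$) can be computed exactly at that single point, and formula (\ref{clcc}) then forces the contact locus to have dimension exactly $6$. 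You should either adopt that dual-variety computation or supply some other exact argument for this one case; everything else in your proposal matches the paper's proof.
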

\begin{proof} Case (\ref{vecchiabis}) is Corollary \ref{repeated}. The dimensions of the contact loci  are computed in the proof of Proposition \ref{duality}  when we show that $\sigma_2(Gr(\mathbb{P}^2, \mathbb{P}^7))$ and  $\sigma_3(Gr(\mathbb{P}^2, \mathbb{P}^7))$ are dual to each other. 

In the proof of Proposition \ref{main} we showed that $Gr(\mathbb{P}^2,\mathbb{P}^9)$ is 5-tangentially weakly defective, therefore it is also 5-weakly defective. In the same proof we also showed that the contact locus is an elliptic normal curve. This proves case (\ref{vecchia}). As already said in the proof of Theorem \ref{Main}, the fact that $Gr(\mathbb{P}^2,\mathbb{P}^9)$ is not 4-weakly defective is done by direct computation.

The only case that we have not proved yet is (\ref{nuova}). We computed, with Macaulay2, the dimension of $(\sigma_2(Gr(\mathbb{P}^2, \mathbb{P}^6)))^{\vee}$, by considering $\sigma_2(Gr(\mathbb{P}^2, \mathbb{P}^6))$ to be the orbit of $e_0\wedge e_1 \wedge e_2 + e_3\wedge e_4 \wedge e_5$ via the action of $SL(7)$ in $\bigwedge^3\mathbb{C}^7$. It turns out that $\dim  (\sigma_2(Gr(\mathbb{P}^2, \mathbb{P}^6)))^{\vee}=21$, therefore,  by the displayed formula (\ref{clcc}) above, the contact locus has dimension 6.

The fact that all the regular cases (i.e. Grassmannians  with $r$-secant varieties of the expected dimension) not listed above are not weakly defective is a consequence of the computation that we have done in the proof of Theorem \ref{Main} that shows that in those cases all contact loci are 0-dimensional.
\end{proof}

\begin{corollary}\label{corolltgweak}The non $r$-defective  Grassmannians $Gr(\mathbb{P}^k,\mathbb{P}^n)$ for $n<14$ are all non $r$-tangentially weakly defective except for: 
\begin{enumerate}
\item $r=3$  and $Gr(\mathbb{P}^2, \mathbb{P}^7)$, where the tangentially contact locus has dimension  5;
\item $r=5$ and $Gr(\mathbb{P}^2,\mathbb{P}^9)$, where the tangentially contact locus has dimension 1.
\end{enumerate}

\end{corollary}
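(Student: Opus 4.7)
The plan is to use the standard inclusion of the $r$-tangentially contact locus inside the $r$-contact locus, so that every $r$-tangentially weakly defective variety is automatically $r$-weakly defective. This reduces the list of candidate exceptional cases to those already described in Corollary \ref{corollweak}: $(r=2,\, Gr(\mathbb{P}^2,\mathbb{P}^6))$, $(r=2,3,\, Gr(\mathbb{P}^2,\mathbb{P}^7))$ and $(r=5,\, Gr(\mathbb{P}^2,\mathbb{P}^9))$. It then remains to decide, for each of these four pairs, whether the tangentially contact locus is positive dimensional, and to compute its dimension when it is.

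For the two pairs with $r=2$, namely $Gr(\mathbb{P}^2,\mathbb{P}^6)$ and $Gr(\mathbb{P}^2,\mathbb{P}^7)$, I would appeal to the direct Macaulay2 computations already invoked in the proof of Theorem \ref{Main}: in both cases the tangentially contact locus is zero-dimensional. By \cite[Proposition 2.4]{co} this implies $2$-identifiability of the generic element and in particular removes these pairs from the exceptional list.

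For $(r=3,\, Gr(\mathbb{P}^2,\mathbb{P}^7))$ the relevant computation has essentially been carried out in the proof of Proposition \ref{3id}: the $3$-tangentially contact locus decomposes explicitly as the union of three disjoint $\mathbb{P}^3$'s (each containing exactly one of the three tangent points) together with a further $\mathbb{P}^5$ disjoint from those tangent points. The top-dimensional component is the $\mathbb{P}^5$, so the $3$-tangentially contact locus has dimension $5$.

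For the last case $(r=5,\, Gr(\mathbb{P}^2,\mathbb{P}^9))$ I would argue by containment plus a secant-degree comparison rather than by a new direct computation. We already know from the proposition preceding Corollary \ref{main} that the $5$-contact locus is an elliptic normal curve $C$ of degree $10$ in a $\mathbb{P}^9$, in particular irreducible of dimension one. The $5$-tangentially contact locus is a subvariety of $C$, so it is either finite or equal to $C$. A finite tangentially contact locus would, via the equality between the $r$-secant degree of $X$ and that of its $r$-tangentially contact locus (cf.\ \cite{cc}), force the $5$-secant degree of $Gr(\mathbb{P}^2,\mathbb{P}^9)$ to equal the $5$-secant degree of a finite set, which is $1$; this contradicts Corollary \ref{main}, where the $5$-secant degree was shown to be $2$. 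Hence the tangentially contact locus coincides with $C$ and has dimension $1$. The main delicate point is this last step: the argument hinges on correctly comparing the $r$-secant degree of the tangentially contact locus with the secant degree of $\sigma_5(Gr(\mathbb{P}^2,\mathbb{P}^9))$ established in Corollary \ref{main}, so one must be careful that the ``finite set'' alternative is genuinely incompatible with having two distinct decompositions.
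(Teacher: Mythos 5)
Your proposal is correct and follows essentially the same route as the paper: reduce to the weakly defective cases of Corollary \ref{corollweak}, dispose of the two $r=2$ cases by the Macaulay2 computations of zero-dimensional tangentially contact loci, read off dimension $5$ from the explicit decomposition in Proposition \ref{3id}, and for $(r=5,\,Gr(\mathbb{P}^2,\mathbb{P}^9))$ combine the secant degree $2$ (which forces positive dimension of the tangentially contact locus) with its containment in the one-dimensional contact locus. Your contradiction phrasing for the last case is just the contrapositive of the identifiability criterion the paper uses, so the arguments coincide.
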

\begin{proof} Since the $r$-tangentially weakly defectiveness implies the $r$-weakly defectiveness, we have to check only weakly defective cases listed in Corollary   \ref{corollweak}.

We  computed with Macaulay2 that the $2$-tangentially contact locus of $Gr(\mathbb{P}^2, \mathbb{P}^7)$ is 0-dimensional. This suffices to prove that $Gr(\mathbb{P}^2, \mathbb{P}^7)$ is not 2-tangentially weakly defective.

In Proposition \ref{3id} we computed the 3-tangentially contact locus of $Gr(\mathbb{P}^2, \mathbb{P}^7)$ and we found that it is the union of three $\mathbb{P}^3$'s and a $\mathbb{P}^5$.

 In Proposition \ref{main} we showed that the 5-th secant degree of $Gr(\mathbb{P}^2, \mathbb{P}^9)$ is two, therefore we don't have the identifiability for the generic element of $\sigma_2(Gr(\mathbb{P}^2, \mathbb{P}^9))$, hence $Gr(\mathbb{P}^2, \mathbb{P}^9)$ is 2-tangentially weakly defective. Moreover since the $5$-contact locus has dimension 1, and the 5-tangentially contact locus has positive dimension, we can conclude that the 5-tangentially contact locus also has dimension 1.
 
 In the proof of Theorem \ref{Main} we have already computed with Macaulay2 that the $2$-tangentially contact locus of $Gr(\mathbb{P}^2, \mathbb{P}^6)$ is 0-dimensional. This suffices to prove that $Gr(\mathbb{P}^2, \mathbb{P}^6)$ is not 2-tangentially weakly defective.
\end{proof}

}

\section{Appendix on a Quantum Physical interpretation}
Quantum technologies are nowadays very active in giving a good measurement of the entanglement of the state of a quantum physical system. In particular systems of identical fermionic particles are of very high interest in Quantum Theory.

We would like to finish our paper with a physical interpretation of our containment diagram for the orbit closures of $SL(8)$ in $\mathbb{P}(\bigwedge^3\mathbb{C}^8)$.

From a physical point of view, an element of $\mathbb{P}(\bigwedge^3\mathbb{C}^8)$ can be interpreted as a system of 3 identical fermions, the state of each belonging to a 8-th dimensional ``~Hilbert~'' space. In \cite{BPRST} the authors describe how the entanglement of a state cannot change under Stochastic Local Quantum Operation and Classical Communication (SLOCC). Performing a SLOCC over a quantum system of $k$ identical fermionic particles on an $n$-dimensional vector space corresponds to act on a vector $|\phi \rangle \in \bigwedge^k \mathbb{C}^n$ with $GL(n)$. Then if one considers that the multiplication by scalars does not affect the state $|\phi\rangle$, one can operate with a SLOCC on the projective class of  $|\phi\rangle$ in  $\mathbb{P}(\bigwedge^k \mathbb{C}^n)$ remaining on the same orbit via $SL(n)$. For the states in the same orbit of $SL(n)$ the entanglement, according to \cite{BPRST}, does not change. Therefore the classification of all the orbits of $SL(n)$ in $\bigwedge^k\mathbb{C}^n$ gives a corresponding classification of all the possible ``~degrees of entanglement~" that a quantum state can have. To be more precise, the containment diagram of our Table \ref{cont} gives precisely the stratification of the entanglement measure of a 3 fermions in $\mathbb{C}^8$: in particular the variety II (i.e. the Grassmannian $Gr(\mathbb{P}^2, \mathbb{P}^7)$) there represents the pure separable states, and all the other states are more entangled as their level in the containment diagram is higher. For example points in the open part of $\sigma_2(Gr(\mathbb{P}^2, \mathbb{P}^7))$ (V) are more entangled than points in the open part of the restricted chordal variety III.

\section*{Acknowledgements} First of all we want to thank G. Ottaviani for the stimulating environment of the Numerical Algebraic Geometry working  group among Firenze and Bologna where the problem addressed in this paper was proposed and for his support in writing this paper. Moreover we like to thank L. Chiantini for many useful conversations and W.A. de Graaf for his help with GAP (\cite{GAP}). We also want to thank F. Han  and K. Ranestad for pointing out two mistakes in the first arXiv versions of this paper; a special thanks goes to K. Ranestad who helped a lot in finding out the 3-torsion scroll. The first author was partially supported by GNSAGA of INDAM, Prin Research Project GVA, Mathematical Department of Bologna and Mathematical Department of Trento.

\end{document}